\documentclass[11pt,reqno]{amsart}
\usepackage[top=3cm, left=3cm, a4paper]{geometry}
\usepackage{amsfonts,amscd,latexsym,amsmath,amssymb,enumerate,verbatim,amsthm,epsfig,alltt,cancel}
\theoremstyle{plain}
\newtheorem{master}{Master}[section]

\newtheorem{thm}[master]{Theorem}

\newtheorem{cor}[master]{Corollary}

\newcommand{\Nat}{\mathbb{N}}

\newcommand{\Int}{\mathbb{Z}}

\theoremstyle{definition}

\theoremstyle{remark}

\numberwithin{equation}{section}
\begin{document}
\title{Embeddings into monothetic groups}
\author{Michal Doucha}
\address{Institute of Mathematics, Academy of Sciences, Prague, Czech republic}
\keywords{monothetic groups, metric groups, separable abelian groups}
\subjclass[2010]{22A05, 54E35}
\email{doucha@math.cas.cz}
\begin{abstract}
We provide a very short elementary proof that every bounded separable metric group embeds into a monothetic bounded metric group, in such a way that the result of Morris and Pestov that every separable abelian topological group embeds into a monothetic group is an immediate corollary. We show that the boundedness assumption is essential.
\end{abstract}
\maketitle
In \cite{MP2} Morris and Pestov prove that every separable abelian topological group embeds into a monothetic group (following their previous generalization of the Higman-Neumann-Neumann theorem for topological groups from \cite{MP1}). Since their proof is rather long and uses several non-elementary results we provide here a short elementary proof of their result which is actually a generalization: it is in the category of metric groups.
\begin{thm}
Let $G$ be a separable abelian group with a bounded invariant metric $d$. Then $d$ extends to a (bounded by the same constant) metric $D$ on $G\oplus C$, where $C$ is a cyclic group which is dense in $G\oplus C$. In particular, $G$ embeds into a monothetic metric group.
\end{thm}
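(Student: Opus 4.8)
The plan is to take the cyclic group to be $C=\mathbb{Z}$ and to equip $G\oplus\mathbb{Z}$ with the \emph{largest} invariant pseudometric that restricts to $d$ on $G$ while forcing a carefully chosen sequence of elements to be small. Fix a sequence $(g_j)_{j\ge 1}$ in $G$ for which every tail $\{g_j:j\ge N\}$ is dense in $G$ (possible by separability, enumerating a countable dense set with infinite repetitions), put $K=\sup d$ (we may assume $G\ne\{0\}$, so $K>0$) and $\epsilon_j=2^{-j}$, and choose integers $1=n_1\mid n_2\mid n_3\mid\cdots$ that grow fast multiplicatively, e.g.\ $n_{j+1}=n_j\cdot 2^{\,j+2}\lceil K+1\rceil$. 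Define, for $(g,m)\in G\oplus\mathbb{Z}$,
\[
 \ell(g,m)=\min\!\Big(K,\ \inf\Big\{\, d\Big(g-\textstyle\sum_j c_j g_j,\,0\Big)+\textstyle\sum_j|c_j|\,\epsilon_j \ :\ (c_j)_j\in\textstyle\bigoplus_j\mathbb{Z},\ \textstyle\sum_j c_j n_j=m\,\Big\}\Big),
\]
and $D\big((g,m),(h,k)\big)=\ell\big((g,m)-(h,k)\big)$; the cyclic subgroup $\langle(0,1)\rangle=\{0\}\oplus\mathbb{Z}$ is the intended dense copy of $C$.

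I would first record the soft properties: $\ell(0)=0$, $\ell$ is symmetric (replace $(c_j)$ by $(-c_j)$, using invariance of $d$), and $\ell(x+y)\le\ell(x)+\ell(y)$ (add two near-optimal representations coefficientwise, using that $g\mapsto d(g,0)$ is subadditive and $|c_j^x+c_j^y|\le|c_j^x|+|c_j^y|$; the outer $\min(K,\cdot)$ preserves subadditivity). So $D$ is an invariant pseudometric bounded by $K$ and $G\oplus\mathbb{Z}$ is a metric group. Density of $\{0\}\oplus\mathbb{Z}$ is then immediate: given $(g,m)$ and $\varepsilon>0$, use tail-density to pick $j$ with $d(g_j,-g)<\varepsilon/2$ and $\epsilon_j<\varepsilon/2$; the one-term representation $c_j=1$ gives $D\big((0,n_j+m),(g,m)\big)=\ell(-g,n_j)\le d(g_j,-g)+\epsilon_j<\varepsilon$.

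The substance — and the main obstacle — is that the two requirements on $(n_j)$ must coexist. (i) For $D$ to extend $d$ one needs $\ell(g,0)=d(g,0)$, i.e.\ no nontrivial relation $\sum_j c_j n_j=0$ pushes the infimum below $d(g,0)$: if $j^{*}$ is the top index of the support of $(c_j)$ (necessarily $\ge 2$, since $n_1=1$), then $n_{j^{*}}\le\big(\max_{j<j^{*}}|c_j|\big)\sum_{j<j^{*}}n_j<2\big(\max_{j<j^{*}}|c_j|\big)n_{j^{*}-1}$, so some earlier coefficient is at least $n_{j^{*}}/(2n_{j^{*}-1})$, which by the choice of $(n_j)$ is large enough to force $\sum_j|c_j|\epsilon_j\ge K\ge d(g,0)$; hence the minimizer is $(c_j)=0$ and $\ell(g,0)=d(g,0)$. (ii) For $D$ to be a genuine metric one needs $\ell(g,m)>0$ whenever $(g,m)\ne 0$: the case $m=0$ is (i), and for $m\ne 0$ let $J$ be the largest index with $n_J\mid m$ (finite because $n_1=1$ and $n_j\to\infty$); reducing any relation $\sum_j c_j n_j=m$ modulo $n_{J+1}$ annihilates all terms with $j>J$ by nestedness, and $n_{J+1}\nmid m$ forces some $c_j$ with $j\le J$ to be nonzero, so $\ell(g,m)\ge\min(K,\epsilon_J)>0$. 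The key feature making this work is that nestedness $n_j\mid n_{j+1}$ (needed for (ii)) is compatible with fast multiplicative growth (needed for (i)), which the displayed recursion arranges. Granting this, $g\mapsto(g,0)$ is an isometric group embedding of $G$ into the metric group $(G\oplus\mathbb{Z},D)$, which is bounded by $K$ and has the dense cyclic subgroup $\{0\}\oplus\mathbb{Z}$; in particular $G$ embeds into a monothetic metric group.
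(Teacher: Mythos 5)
Your proof is correct. The soft properties of $\ell$ go through as you say; the geometric growth $n_{j+1}/n_j=2^{j+2}\lceil K+1\rceil$ does force every nontrivial relation $\sum_j c_jn_j=0$ to carry a coefficient $|c_{j_0}|\ge 2^{j^*}\lceil K+1\rceil$ at some $j_0\le j^*-1$, hence a penalty $|c_{j_0}|\epsilon_{j_0}\ge 2\lceil K+1\rceil>K\ge d(g,0)$, so the extension is isometric on $G$; and the nestedness $n_j\mid n_{j+1}$ gives the uniform lower bound $\min(K,2^{-J})$ off $G$, so $D$ is a genuine metric and the tail-density argument makes $\{0\}\oplus\mathbb{Z}$ dense. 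At bottom this is the same construction as the paper's: the largest invariant metric that agrees with $d$ on $G$ and makes $c^{n_j}$ lie within $\epsilon_j$ of a prescribed dense sequence, with exponents growing fast enough that these constraints cannot conspire to shrink anything in $G$. But your verification is genuinely different. The paper works on $H\oplus C$ for a countable dense subgroup $H$, builds a \emph{partial} norm inductively by declaring $D'(c^{k_n}-h_{\pi(n)(1)})=1/\pi(n)(2)$ with $k_n>k_{n-1}/\delta$, and at each step checks the triangle inequality by a single counting argument (any violating decomposition must involve more than $1/\delta$ generators with nonzero $c$-exponent, whose $D'$-values alone exceed the bound); the final metric is then the induced greatest norm. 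You instead write the metric in closed form at once, work directly on $G$, and split non-degeneracy into two separate mechanisms: fast multiplicative growth for isometry on $G$, and divisibility $n_j\mid n_{j+1}$ for positivity off $G$. The divisibility trick has no counterpart in the paper and buys you a very clean reduction mod $n_{J+1}$ where the paper reasons about cancellation of exponents. One trade-off worth noting: the paper's sequence $(k_n)$ is chosen independently of the metric, which is exactly what makes the Morris--Pestov corollary immediate (the same $(k_n)$ serves simultaneously for a whole family of pseudonorms), whereas your $(n_j)$ depends on $K$; since all pseudonorms in that application may be normalized to be bounded by $1$, this costs nothing, but it is the one place where the paper's formulation is doing extra work.
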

\begin{proof}
Instead of metric, we shall work with the corresponding norm, i.e. the distance of an element from the group zero, which we shall still denote $d$, or $D$. That is, $d(g)$ denotes $d(g,0)$. Without loss of generality, suppose that $d$ is bounded by $1$. Let $H=\{h_n:n\in\Nat\}$ be a countable dense subgroup of $G$ and let $\pi:\Nat\rightarrow \Nat^2$ be some bijection; we denote by $\pi(n)(1)$, resp. $\pi(n)(2)$, the respective coordinates of $\pi(n)$. Suppose the cyclic group $C$ is generated by some $c$. By induction, we shall construct a partial norm $D'$ on $H\oplus C$ which extends $d$, i.e. a partial function satisfying $D'(g)=0$ iff $g=0$, $D'(g)=D'(-g)$ and $D'(g_1+\ldots+g_n)\leq D'(g_1)+\ldots+D'(g_n)$, whenever the corresponding elements are in the domain of $D'$. Moreover, we shall produce an increasing sequence $(k_n)_n$ such that $D'(c^{k_n}-h_{\pi(n)(1)})\leq 1/\pi(n)(2)$. At the end, we may define $D$ by $D(x)=\min\{1,\inf\{\sum_{i=1}^m D'(x_i):x=\sum_{i=1}^m x_i,(x_i)_i\subseteq \mathrm{dom}(D')\}\}$, for $x\in H\oplus C$. Since $D'$ was a partial norm, $D$ extends $D'$, thus it extends $d$, and by the induction we will have guaranteed that $C$ is dense in $H\oplus C$.

At the first step of the induction, we set $D$ to be equal to $d$ on $H$, set $k_1=1$ and we define $D'(c-h_{\pi(1)(1)})=D'(h_{\pi(1)(1)}-c)=1/\pi(1)(2)$. $D'$ is clearly a partial norm. Suppose we have done the first $n-1$ steps, found $k_1,\ldots,k_{n-1}$ and defined $D'$ appropriately so that it is a partial norm. Let $\delta=\min\{1/\pi(i)(2):i<n\}$ and let $k_n>k_{n-1}$ be arbitrary satisfying $k_n>k_{n-1}/\delta$. Then we set  $D'(c^{k_n}-h_{\pi(n)(1)})=D'(h_{\pi(n)(1)}-c^{k_n})=1/\pi(n)(2)$. We claim that $D'$ is still a partial norm. Suppose on the contrary that the triangle inequality is broken, i.e. there are $x,x_1,\ldots,x_n\in\mathrm{dom}(D')$ such that $x=x_1+\ldots+x_n$ and $D'(x)>D'(x_1)+\ldots+D'(x_n)$. We shall suppose that $x\in H$, that is the most important case and the other case is treated analogously. For any $z\in H\oplus C$, denote by $k(z)$ the unique integer such that $z$ can be written as $h\oplus c^{k(z)}$. Since $k(x)=0$, we must have $\sum_{i=1}^n k(x_i)=0$. For at least one $i\leq n$ we must have that $k(x_i)$ is $k_n$ or $-k_n$, since before the extension at the $n$-th step, $D'$ was a partial norm. Also, since $H$ is abelian, we may suppose that for no $i,j\leq n$ we have $k(x_i)=-k(x_j)\neq 0$, since in that case $x_i+x_j=0$. Let $I\subseteq \{i\leq n:0\neq k(x_i)\neq k_n\}$. It follows that $|\sum_{i\in I} k(x_i)|\geq k_n$, so by definition of $k_n$ we have $|I|>1/\delta$, so $\sum_{i\in I} D'(x_i)>1$, a contradiction.
\end{proof}
Notice that the numbers $(k_n)_n$ were chosen completely independently of the metric/norm, and the same sequence may be used for any metric/norm bounded by $1$. Secondly, notice that there is no change in the proof if we replace metric, resp. norm by pseudometric, resp. pseudonorm.
\begin{cor}[Morris, Pestov]
Every separable abelian topological group $G$ embeds into a monothetic group.
\end{cor}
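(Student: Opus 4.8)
The plan is to deduce the corollary from the Theorem (in its pseudometric form, legitimate by the remark that the proof goes through verbatim for pseudometrics), using the standard fact that the topology of an abelian topological group is generated by a family of continuous invariant pseudometrics. First I would fix such a family $\{d_i:i\in I\}$ for $G$, each member bounded by $1$ (replace $d_i$ by $\min\{d_i,1\}$), together with a countable dense subgroup $H=\{h_n:n\in\Nat\}$ of $G$; since $H$ is dense in the original topology of $G$ it is dense in each coarser topology induced by a $d_i$, so for every $i$ the datum $(G,d_i,H)$ is a legitimate input to the construction in the proof of the Theorem.

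Next I would fix, once and for all and independently of $i$, a bijection $\pi:\Nat\to\Nat^2$, an infinite cyclic group $C=\langle c\rangle$, and the increasing sequence $(k_n)_n$ exactly as in the proof of the Theorem --- this is legitimate because, as noted right after that proof, the $k_n$ depend only on $\pi$ and not on the metric. Running the Theorem with this common data for each $i\in I$ produces, for every $i$, an invariant pseudometric $D_i$ on $G\oplus C$, bounded by $1$, extending $d_i$, and satisfying $D_i(c^{k_n}-h_{\pi(n)(1)})\le 1/\pi(n)(2)$ for all $n$. Let $\tau$ be the topology on $G\oplus C$ generated by the whole family $\{D_i:i\in I\}$; since each $D_i$ is invariant, $(G\oplus C,\tau)$ is a topological group, and I would let $M$ be its Hausdorff quotient, i.e. the quotient by the closed subgroup $N=\bigcap_{i\in I}\{z:D_i(z)=0\}$.

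It then remains to verify, all routinely, that (i) $G$ embeds topologically into $M$: the composite $G\hookrightarrow G\oplus C\to M$ is injective because $N\cap G=\{0\}$ (the $d_i$ separate points of $G$), and it is a topological embedding because the quotient pseudometrics on $M$ pull back along it precisely to the $d_i$, which generate the topology of $G$; and (ii) the image of $C$ is dense in $M$, for which it suffices that $C$ be $\tau$-dense in $G\oplus C$: given $g\oplus c^\ell\in G\oplus C$, finitely many indices $i_1,\dots,i_m$ and $\varepsilon>0$, pick $h_N\in H$ with $d_{i_j}(g-h_N)<\varepsilon/2$ for all $j$ (possible as $H$ is dense in the original topology) and then $n$ with $\pi(n)(1)=N$ and $1/\pi(n)(2)<\varepsilon/2$ (possible as $\pi$ is onto $\Nat^2$), whereupon $c^{k_n+\ell}$ lies within $\varepsilon$ of $g\oplus c^\ell$ in every $D_{i_j}$ simultaneously. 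Thus $M$ is Hausdorff with a dense cyclic subgroup, hence monothetic, and (i) finishes the proof. The one point to get right --- and the reason a naive embedding into a product of the separable monothetic metric groups supplied by the Theorem does \emph{not} work --- is that everything must take place inside the single group $G\oplus C$ with the joint topology $\tau$; the uniformity of $(k_n)_n$ in the metric is exactly what guarantees that one power $c^{k_n}$ approximates $h_{\pi(n)(1)}$ simultaneously in all the $D_i$, which is what makes $C$ dense for $\tau$.
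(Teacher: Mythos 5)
Your proposal is correct and follows essentially the same route as the paper: extend each pseudonorm of a generating family to $G\oplus C$ using one fixed $\pi$ and one fixed sequence $(k_n)_n$ (legitimate since these are independent of the metric), so that $C$ is dense in the resulting joint topology. The paper's own proof is just a terser version of this, differing only in packaging (it completes $H\oplus C$ or amalgamates, where you pass to the Hausdorff quotient), and your filled-in verifications of the embedding and of the simultaneous density of $C$ are accurate.
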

\begin{proof}
Let $H$ be a countable dense subgroup of $G$ and let $(\rho_\alpha)_{\alpha<\kappa}$ be a collection of pseudonorms bounded by $1$ which give the topology of $G$, and $H$. By the previous proof we may extend each $\rho_\alpha$ on $H\oplus C$, where $C$ is cyclic, using the same numbers $(k_n)_n$ and $\pi:\Nat\rightarrow\Nat^2$. By either completing $H\oplus C$ with respect to these pseudonorms or extending the pseudonorms on $G\oplus C$ by amalgamation, we obtain a monothetic group to which $G$ embeds.
\end{proof}
One may wonder why we assume boundedness of the metric/norm. It turns out it is essential. Consider $\ell^2_1$, the two-dimensional Banach space with $\ell_1$ norm, and let $H$ be its metric subgroup generated by the two basis elements $e_1,e_2$. Suppose there is a norm $D$ on $H\oplus C$, with $C$ cyclic, which extends the norm on $H$, and $C$ is dense in $H\oplus C$. Then there exist $n,m\in\Int$ such that $D(c^n-e_1)<1/2$ and $D(e_2-c^m)<1/2$. Then however $|m|+|n|=D(-m\cdot e_1+n\cdot e_2)=D(c^{mn}-m\cdot e_1+n\cdot e_2-c^{nm})\leq mD(c^n-e_1)+nD(e_2-c^m)<1/2(|m|+|n|)$, a contradiction.\\

\noindent {\bf Problem.} Prove a metric version of the Higman-Neumann-Neumann theorem. Either for separable groups with general left-invariant metrics, or for groups with bi-invariant metric.


\begin{thebibliography}{2}
\bibitem{MP1}
S. A. Morris and V. Pestov, \emph{A topological generalization of the Higman--Neumann--Neumann theorem}, J. Group Theory 1 (1998), 181--187
\bibitem{MP2}
S. A. Morris, V. Pestov, \emph{Subgroups of monothetic groups}, J. Group Theory 3 (2000), no. 4, 407--417
\end{thebibliography}
\end{document}